\documentclass[english]{scrartcl}

\usepackage[T1]{fontenc}
\usepackage[utf8]{inputenc}

\usepackage{amsmath,amsfonts,amsthm,amssymb}
\usepackage[colorlinks,citecolor=blue]{hyperref}
\usepackage{doi,natbib}
\usepackage{cancel}

\usepackage{changes}

\usepackage[figure]{hypcap}
\usepackage{graphicx}
\usepackage{subcaption}

\usepackage{tikz}
\usetikzlibrary{arrows}

\tikzset{
    punkt/.style={
           rectangle,
           draw=white, very thick,
           text width=10.0em,
           minimum height=1.5em,
           text centered}
}

\usepackage{preamble}

\usepackage{marginnote}

\usepackage[capitalise,nameinlink]{cleveref}
\allowdisplaybreaks


\newcommand\norm[2]{\left\Vert#1\right\Vert_{#2}}

\newcommand\N{\mathbb{N}}
\newcommand\R{\mathbb{R}}

\newcommand{\dist}{\operatorname{dist}}
\newcommand{\dom}{\operatorname{dom}}
\newcommand{\gph}{\operatorname{gph}}
\newcommand{\intr}{\operatorname{int}}
\DeclareMathOperator*{\argmin}{\operatorname{argmin}}

\newtheorem{theorem}{Theorem}[section]
\newtheorem{lemma}[theorem]{Lemma}

\newtheorem{remark}[theorem]{Remark}
\newtheorem{definition}[theorem]{Definition}
\newtheorem{example}[theorem]{Example}

\crefname{figure}{Figure}{Figures}

\begin{document}

\title{A note on partial calmness for bilevel optimization problems with linearly structured lower level}
\author{%
	Patrick Mehlitz%
	\footnote{%
		Brandenburgische Technische Universit\"at Cottbus--Senftenberg,
		Institute of Mathematics,
		03046 Cottbus,
		Germany,
		\email{mehlitz@b-tu.de},
		\url{https://www.b-tu.de/fg-optimale-steuerung/team/dr-patrick-mehlitz},
		ORCID: 0000-0002-9355-850X%
		}
	\and
	Leonid I.\ Minchenko%
	\footnote{%
		Belarus State University of Informatics and Radioelectronics,
		6 P.\ Brovki Street,
		Minsk 220013,
		Belarus,
		\email{leonidm@insoftgroup.com},
		ORCID: 0000-0002-8773-2559
		}
	\and
	Alain B.\ Zemkoho%
	\footnote{%
		University of Southampton,
		School of Mathematics,
		Southampton SO17 1BJ,
		United Kingdom,
		\email{a.b.zemkoho@soton.ac.uk},
		\url{https://www.southampton.ac.uk/maths/about/staff/abz1e14.page},
		ORCID: 0000-0003-1265-4178
		}
	}

\publishers{}
\maketitle

\begin{abstract}
	Partial calmness is a celebrated but restrictive property of
	bilevel optimization problems whose presence opens a way to
	the derivation of Karush--Kuhn--Tucker-type necessary optimality conditions in
	order to characterize local minimizers.
	In the past, sufficient conditions for the validity of
	partial calmness have been investigated. In this regard,
	the presence of a linearly structured lower level problem
	has turned out to be beneficial. However, the associated
	literature suffers from inaccurate results.
	In this note, we clarify some regarding erroneous statements
	and visualize the underlying issues with the aid of 
	illustrative counterexamples.
\end{abstract}

\begin{keywords}	
	Bilevel optimization, Linear programming, Partial calmness
\end{keywords}

\begin{msc}	
	49J53, 90C30
\end{msc}

\section{Introduction}\label{sec:introduction}

We consider the standard bilevel optimization problem
\begin{equation}\label{eq:BPP}\tag{BPP}
	\min\limits_{x,y}\{F(x,y)\,|\,x\in X,\,y\in S(x)\},
\end{equation}
where $F\colon\R^n\times\R^m\to\R$ is a locally Lipschitz continuous function, the set 
$X\subset\R^n$ is nonempty and closed, and $S\colon\R^n\rightrightarrows\R^m$
is the solution mapping of a standard parametric optimization problem, i.e.,
\[
	\forall x\in\R^n\colon\quad
		S(x):=\argmin_y\{f(x,y)\,|\,g(x,y)\leq 0\}.
\]
Above, the data functions $f\colon\R^n\times\R^m\to\R$ and $g\colon\R^n\times\R^m\to\R^q$
are assumed to be continuous. The component functions of $g$ will be addressed
by $g_1,\ldots,g_q\colon\R^n\times\R^m\to\R$.
Nowadays, bilevel programming is one of the most intensively investigated topics in optimization theory
since, on the one hand, there exist numerous underlying applications from economics, 
finance, chemistry, or engineering
while, on the other hand, problems of this type are quite challenging, 
both theoretically and numerically, see \cite{Bard1998,Dempe2002,DempeKalashnikovPerezValdesKalashnykova2015}.
In this note, we are concerned with the concept of partial calmness, fundamental in deriving 
necessary optimality conditions when the so-called optimal value function reformulation 
(a precise definition is stated below) is under consideration, see
\cref{sec:partial_calmness} for details.

To introduce this reformulation, we exploit the function $\varphi\colon\R^n\to\overline{\R}$ given by
\[
	\forall x\in\R^n\colon\quad\varphi(x):=\inf\limits_y\{f(x,y)\,|\,g(x,y)\leq 0\}.
\]
Clearly, $\varphi$ is the so-called optimal value function of the parametric
optimization problem $\min_y\{f(x,y)\,|\,g(x,y)\leq 0\}$ which is referred
to as the lower level problem of \eqref{eq:BPP}.
It is well known that \eqref{eq:BPP} is equivalent to its 
optimal value function reformulation defined by
\begin{equation}\label{eq:OVR}\tag{OVR}
	\min\limits_{x,y}\{F(x,y)\,|\,x\in X,\,f(x,y)-\varphi(x)\leq 0,\,g(x,y)\leq 0\}.
\end{equation}
Observing that \eqref{eq:OVR} is a single-level optimization problem, this reformulation
approach opens a way to the theoretical and numerical treatment of \eqref{eq:BPP}.
Due to the implicit character of $\varphi$, one has to observe that \eqref{eq:OVR}
is still a quite challenging problem since it is often not possible to
compute a fully explicit representation of the function $\varphi$ in practice.
Additionally, \eqref{eq:OVR} is generally nonsmooth
since $\varphi$ is nonsmooth in several practically relevant situations.
Moreover, it is folklore that \eqref{eq:OVR} is inherently irregular by definition of
$\varphi$, i.e., standard constraint qualifications from (nonsmooth) optimization fail to hold at
all feasible points of \eqref{eq:OVR}, see e.g.\
\cite[Proposition~3.2]{YeZhu1995}.
Finally, let us mention that \eqref{eq:OVR} is a nonconvex optimization problem in
general even if all the data functions $F$, $f$, and $g_1,\ldots,g_q$ as well as the
set $X$ are convex.
Despite all these shortcomings, reformulating
\eqref{eq:BPP} via \eqref{eq:OVR} became quite popular in the mathematical programming
community. Starting with \cite{Outrata1988}, there appeared numerous publications which
exploit \eqref{eq:OVR} for the derivation of optimality conditions and solution
algorithms for \eqref{eq:BPP}.

In the seminal paper \cite{YeZhu1995}, the authors suggested to investigate the situation
where $(x,y)\mapsto f(x,y)-\varphi(x)$ is a locally exact penalty function for \eqref{eq:OVR}
at some of its local minimizers $(\bar x,\bar y)\in\R^n\times\R^m$ in more detail. 
More precisely, they discussed conditions ensuring the existence of a finite scalar
$\bar\kappa>0$ such that $(\bar x,\bar y)$ is a local minimizer of the partially
penalized problem 
\begin{equation}\label{eq:partially_penalized_OVR}\tag{OVR$(\kappa)$}
	\min\limits_{x,y}\{F(x,y)+\kappa(f(x,y)-\varphi(x))\,|\,x\in X,\,g(x,y)\leq 0\}
\end{equation}
for all $\kappa\geq\bar\kappa$, too. 
The authors called this property \emph{partial calmness} of \eqref{eq:BPP} at
$(\bar x,\bar y)$, see \cref{def:partial_calmness} and 
\cref{lem:partial_calmness_and_exact_penalization}, according to Clarke's classical
notion of calmness for nonlinear optimization problems, see \cite[Section~6.4]{Clarke1983}.
Observing that \eqref{eq:partially_penalized_OVR}
may satisfy standard constraint qualifications, the presence of the 
partial calmness property opens a way to the derivation of
necessary optimality conditions for \eqref{eq:BPP} via \eqref{eq:OVR}.
This has been done successfully in e.g.\
\cite{DempeDuttaMordukhovich2007,DempeFranke2015,DempeZemkoho2011,DempeZemkoho2012,DempeZemkoho2013,
MordukhovichNamPhan2012,YeZhu1995,YeZhu2010}.
Recently, some Newton-type methods for the numerical solution of
\eqref{eq:BPP} have been developed which are based on the presence of the
partial calmness property, see
\cite{FischerZemkohoZhou2019,FliegeTinZemkoho2020}.
Let us note that the idea of partial calmness can be generalized to far more difficult
settings, e.g.\ to bilevel optimal control problems or to situations where the lower level program
is a parametric conic optimization problem, see e.g.\
\cite{BenitaMehlitz2016,DempeMefoMehlitz2018,Mehlitz2016,Ye1995,Ye1997}.
Unfortunately, partial calmness is a quite restrictive property, see
\cite{HenrionSurowiec2011}, which only holds in very particular
situations, e.g.\ where the lower level problem of \eqref{eq:BPP} is fully linear, i.e., when
the functions $f$ and $g$ are affine w.r.t.\ all variables, see
\cite[Proposition~4.1]{YeZhu1995}. The latter result gave rise to a number of publications where
the authors tried to generalize this observation to lower level problems where linearity
is only present w.r.t.\ $y$, see e.g.\ \cite{DempeZemkoho2012,DempeZemkoho2013}. However, as we will see
in this note, such a generalization is not possible in general.
We present simple counterexamples which refute more general versions of \cite[Proposition~4.1]{YeZhu1995}.
Furthermore, we point out the essential bug, originating from \cite{YeZhu1995},
which caused the proof in e.g.\ \cite[Theorem~4.2]{DempeZemkoho2013} to be erroneous.

The remaining parts of this note are organized as follows.
In \cref{sec:notation}, we briefly summarize the notation and terminology used in this manuscript.
We formally introduce the concept of partial calmness and recall some sufficient conditions
guaranteeing its validity in \cref{sec:partial_calmness}.
\Cref{sec:linear_lower_level} is dedicated to the investigation of partial calmness in the
context of bilevel optimization with linearly structured lower level problems.
We first state a correct proof of the seminal result \cite[Proposition~4.1]{YeZhu1995}
which addresses fully linear lower level problems. Furthermore, we comment on the
bug from the classical proof stated in \cite{YeZhu1995}. 
By means of examples, we visualize that partial calmness
does not need to be inherent as soon as the lower level problem is only linear w.r.t.\
the variable $y$. Reviewing some literature, we report on selected conditions which ensure
validity of partial calmness in this situation.
We finalize the paper by means of some concluding remarks in \cref{sec:conclusion}.

\section{Notation}\label{sec:notation}

In this manuscript, we mainly exploit standard notation.
Without loss of generality, we equip all appearing spaces (including product structures)
with the maximum norm $\norm{\cdot}{\infty}$.
For some vector $x\in\R^n$ and a scalar $\varepsilon>0$,
$\mathbb U_\varepsilon(x):=\{y\in\R^n\,|\,\norm{x-y}{\infty}<\varepsilon\}$
and $\mathbb B_\varepsilon(x):=\{y\in\R^n\,|\,\norm{x-y}{\infty}\leq\varepsilon\}$
denote the open and closed $\varepsilon$-ball around $x$, respectively.
Furthermore, for an arbitrary set $A\subset\R^n$, we use
\[
	\dist(x,A):=\inf\{\norm{x-y}{\infty}\,|\,y\in A\}
\]
in order to represent the distance of $x$ to $A$.
For some matrix $M\in\R^{m\times n}$ and an index set $I\subset\{1,\ldots,m\}$,
$M_I\in\R^{|I|\times n}$ is the matrix which results from $M$ by deleting all
rows whose associated index does not belong to $I$.
We use $\mathtt e\in\R^n$ to denote the all-ones vector.

Let $\Upsilon\colon\R^n\rightrightarrows\R^m$ be a set-valued mapping.
The domain and the graph of $\Upsilon$ are
defined by $\dom\Upsilon:=\{x\in\R^n\,|\,\Upsilon(x)\neq\varnothing\}$
and $\gph\Upsilon:=\{(x,y)\in\R^n\times\R^m\,|\,y\in\Upsilon(x)\}$, respectively.
Recall that $\Upsilon$ is called inner semicontinuous at some point
$(\bar x,\bar y)\in\gph\Upsilon$ 
whenever for each sequence $\{x_k\}_{k\in\N}\subset\R^n$
converging to $\bar x$, there is another sequence $\{y_k\}_{k\in\N}\subset\R^m$ converging to
$\bar y$ such that $y_k\in\Upsilon(x_k)$ holds for all sufficiently large $k\in\N$.
Assume that there are continuous functions $h_1,\ldots,h_q\colon\R^n\times\R^m\to\R$
such that $\Upsilon$ possesses the particular form
\[
	\forall x\in\R^n\colon\quad
	\Upsilon(x):=\{y\in\R^m\,|\,h_i(x,y)\leq 0,\,i=1,\ldots,q\}
\]
and that $(\bar x,\bar y)\in\gph\Upsilon$ is chosen arbitrarily.
Then $\Upsilon$ is called R-regular at $(\bar x,\bar y)$ w.r.t.\ $\Omega\subset\R^n$
whenever there are constants $\kappa>0$ and $\varepsilon>0$ such that
\begin{align*}
	&\forall (x,y)\in\mathbb U_\varepsilon(\bar x,\bar y)\cap(\Omega\times\R^m)\colon\\
	&\qquad\dist(y,\Upsilon(x))\leq \kappa\,\max\{0,\max\{h_i(x,y)\,|\,i\in\{1,\ldots,q\}\}\}
\end{align*}
holds, see e.g.\ \cite{BednarczukMinchenkoRutkowski2019}.
In case where this condition holds for $\Omega:=\R^n$, 
we simply say that $\Upsilon$ is R-regular at $(\bar x,\bar y)$.
Typically, one of the settings $\Omega:=\R^n$ or $\Omega:=\dom\Upsilon$ is under consideration.
Due to \cite[Theorem~5.1]{BednarczukMinchenkoRutkowski2019}, R-regularity of $\Upsilon$ at some point 
of its graph is stronger than the so-called Aubin property, which is a prominent Lipschitzian property of
set-valued mappings, provided that the functions $h_1,\ldots,h_q$ are locally
Lipschitz continuous in a neighbourhood of the point of interest.

\section{Bilevel optimization and partial calmness}\label{sec:partial_calmness}

Let us state the classical definition of partial calmness due to \cite{YeZhu1995}.
This property demands the problem \eqref{eq:OVR} to behave stably in a certain sense
w.r.t.\ small perturbations of the constraint $f(x,y)-\varphi(x)\leq 0$ at a given
local minimizer of \eqref{eq:BPP}. Partial calmness originates from the classical notion
of calmness for standard nonlinear optimization problems introduced in \cite[Section~6.4]{Clarke1983}.
\begin{definition}\label{def:partial_calmness}
	Let $(\bar x,\bar y)\in\R^n\times\R^m$ be a local minimizer of \eqref{eq:BPP}.
	We say that \eqref{eq:BPP} is \emph{partially calm} at $(\bar x,\bar y)$ if
	there exist $\delta>0$ and $\bar\kappa>0$ such that for each triplet
	$(x,y,u)\in\mathbb B_\delta(\bar x,\bar y,0)$ satisfying
	\[
		x\in X,\quad f(x,y)-\varphi(x)\leq u,\quad g(x,y)\leq 0,
	\]
	we have $F(x,y)+\bar\kappa|u|\geq F(\bar x,\bar y)$.
\end{definition}

As we already mentioned in \cref{sec:introduction}, it is clear from \cite[Proposition~3.3]{YeZhu1995}
that partial calmness of \eqref{eq:BPP} at one of its local minimizers $(\bar x,\bar y)$ is equivalent to
$(x,y)\mapsto f(x,y)-\varphi(x)$ being a locally exact penalty function for \eqref{eq:OVR} at $(\bar x,\bar y)$. We summarize this observation in the subsequently stated lemma.
\begin{lemma}\label{lem:partial_calmness_and_exact_penalization}
	Let $(\bar x,\bar y)\in\R^n\times\R^m$ be a local minimizer of \eqref{eq:BPP}.
	Then \eqref{eq:BPP} is partially calm at $(\bar x,\bar y)$ if and only if there is some $\bar\kappa>0$
	such that $(\bar x,\bar y)$ is a local minimizer of \eqref{eq:partially_penalized_OVR} for
	each $\kappa\geq\bar\kappa$.
\end{lemma}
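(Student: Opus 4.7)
The plan is to check the two implications separately, using the single linking observation that whenever $g(x,y)\le 0$, the quantity $f(x,y)-\varphi(x)$ is automatically nonnegative by definition of $\varphi$. Consequently, on the feasible set of \eqref{eq:partially_penalized_OVR} the absolute value in \cref{def:partial_calmness} can be dropped, and the slack $u$ plays the role of a loose upper bound on $f(x,y)-\varphi(x)$. Note also that $(\bar x,\bar y)$ is feasible for \eqref{eq:partially_penalized_OVR} with $\varphi(\bar x)=f(\bar x,\bar y)$, because $\bar y\in S(\bar x)$.

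For the ``$\Leftarrow$'' direction, I would pick the neighbourhood radius $\delta'>0$ and the multiplier $\bar\kappa$ witnessing local minimality of $(\bar x,\bar y)$ for \eqref{eq:partially_penalized_OVR}, set $\delta:=\delta'$, and take any triplet $(x,y,u)\in\mathbb B_\delta(\bar x,\bar y,0)$ satisfying the three constraints in \cref{def:partial_calmness}. From $g(x,y)\le 0$ one gets $0\le f(x,y)-\varphi(x)\le u$, hence $|u|=u\ge f(x,y)-\varphi(x)$. Combining this with the local minimality inequality $F(x,y)+\bar\kappa(f(x,y)-\varphi(x))\ge F(\bar x,\bar y)$ yields $F(x,y)+\bar\kappa|u|\ge F(\bar x,\bar y)$. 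This is the quick direction and requires no further ideas.

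For the ``$\Rightarrow$'' direction, assume partial calmness with constants $\delta,\bar\kappa$ and fix $\kappa\ge\bar\kappa$. For $(x,y)\in\mathbb B_{\delta'}(\bar x,\bar y)$ with $x\in X$ and $g(x,y)\le 0$, I would set $u:=f(x,y)-\varphi(x)\ge 0$ and split into two cases. If $u\le\delta$, the triplet $(x,y,u)$ lies in $\mathbb B_\delta(\bar x,\bar y,0)$ (assuming $\delta'\le\delta$) and satisfies the hypotheses of partial calmness with equality in $f(x,y)-\varphi(x)\le u$, so $F(x,y)+\bar\kappa u\ge F(\bar x,\bar y)$. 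Because $\kappa\ge\bar\kappa$ and $u\ge 0$, this upgrades to $F(x,y)+\kappa u\ge F(\bar x,\bar y)$.

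The step I expect to be the main obstacle is the complementary case $u>\delta$: no triplet with slack $u$ fits into the calmness ball, so \cref{def:partial_calmness} is not directly applicable. The remedy is to exploit that $F$ is locally Lipschitz near $(\bar x,\bar y)$, say with modulus $L>0$, which gives $F(x,y)\ge F(\bar x,\bar y)-L\delta'$ on $\mathbb B_{\delta'}(\bar x,\bar y)$. Choosing $\delta'\le\min\{\delta,\bar\kappa\delta/L\}$ yields $\kappa u>\kappa\delta\ge\bar\kappa\delta\ge L\delta'$, so the penalty dominates the Lipschitz oscillation of $F$ and $F(x,y)+\kappa u\ge F(\bar x,\bar y)$ follows. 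Balancing the size of the neighbourhood against the penalty slack in this way is the only subtle point; with that trade-off in hand, the equivalence is complete.
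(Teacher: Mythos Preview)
Your argument is correct in both directions. The observation that $g(x,y)\le 0$ forces $f(x,y)-\varphi(x)\ge 0$ is precisely what lets you drop the absolute value and identify $u$ with the optimality gap, and your two-case split in the ``$\Rightarrow$'' direction---shrinking the neighbourhood so that the Lipschitz oscillation $L\delta'$ of $F$ cannot exceed $\bar\kappa\delta$---is the standard device for dealing with points where the gap is too large to fit into the calmness ball.

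There is, however, nothing in the paper to compare against: the paper does not supply its own proof of this lemma but simply records the statement and attributes it to \cite[Proposition~3.3]{YeZhu1995}. Your write-up is the usual exact-penalty/calmness equivalence argument and goes beyond what the paper provides. One minor remark: in the ``$\Leftarrow$'' direction you only use local minimality of $(\bar x,\bar y)$ for the single value $\kappa=\bar\kappa$, so the stronger hypothesis ``for each $\kappa\ge\bar\kappa$'' is not actually needed there (and, conversely, the monotonicity step you invoke in the forward direction shows that minimality for $\bar\kappa$ already implies it for all larger $\kappa$ on the same neighbourhood).
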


Next, we want to mention some criteria ensuring validity of the partial calmness
condition.
Therefore, we first introduce two set-valued mappings $\Gamma,\Phi\colon\R^n\rightrightarrows\R^m$
as stated below:
\[
	\begin{aligned}
		&\forall x\in\R^n\colon\quad&
		\Gamma(x)&:=\{y\in\R^m\,|\,g(x,y)\leq 0\},&\\
		&&
		\Phi(x)&:=\{y\in\R^m\,|\,f(x,y)-\varphi(x)\leq 0,\,g(x,y)\leq 0\}.
	\end{aligned}
\]
Let us mention that $\gph S$ and $\gph\Phi$ actually coincide by definition of the
optimal value function $\varphi$. However, $\Phi$ characterizes the lower level
solution set with the aid of standard inequality constraints comprising the implicitly
given function $\varphi$ while $S$ is completely implicit.

The following definitions are motivated by the considerations in the papers
\cite{BednarczukMinchenkoRutkowski2019,HenrionSurowiec2011,YeZhu1995}, which,
at least in parts, are concerned with bilevel optimization.
\begin{definition}\label{def:partial_calmness_CQs}
Fix some point $(\bar x,\bar y)\in\gph S$.
\begin{itemize}
  \item[(i)] We say that the lower level problem satisfies the
  		\emph{local uniformly weak sharp minimum condition} (LUWSMC)
  		at $(\bar x,\bar y)$ whenever there are some $\alpha>0$ and $\varepsilon>0$
  		such that the following condition holds:
  		\begin{equation}\label{eq:LUWSMC}
  			\forall (x,y)\in \gph\Gamma\cap\mathbb U_\varepsilon(\bar x,\bar y)\colon
  			\quad
  			\alpha\,\dist(y,S(x))\leq f(x,y)-\varphi(x).
  		\end{equation}
  		If the above condition can be strengthened to
  		\[
  			\forall (x,y)\in \gph\Gamma\colon
  			\quad \alpha\,\dist(y,S(x))\leq f(x,y)-\varphi(x),
  		\]
  		then the lower level problem is said to have a \emph{uniformly weak sharp minimum} (UWSM).
  		Particularly, the latter condition is independent of 
  		the point of interest $(\bar x,\bar y)$.
  \item[(ii)] We say that the \emph{R-regularity constraint qualification} (RRCQ) holds at
  		$(\bar x,\bar y)$ whenever $\Phi$ is R-regular at $(\bar x,\bar y)$ w.r.t.\ $\dom\Phi$.
\end{itemize}
\end{definition}

By definition, validity of UWSM implies that LUWSMC holds at all points in $\gph S$.
Both conditions originate from the notion of weak sharp minima which addresses constrained 
optimization problems, see \cite{BurkeFerris1993}.
Note that validity of RRCQ at some point $(\bar x,\bar y)\in\gph S$ implicitly demands that $\varphi$
is continuous around $\bar x$.

Below, we study the relationship between the conditions LUWSMC and RRCQ
as well as their connection to the partial calmness property.
First, we would like to show that whenever a given local minimizer of \eqref{eq:BPP} satisfies
LUWSMC, then \eqref{eq:BPP} is partially calm at this point. 
This result generalizes related observations
from \cite[Proposition~5.1]{YeZhu1995} or \cite[Propositions~3.8 and~3.10]{HenrionSurowiec2011}.
\begin{lemma}\label{lem:LUWSM_ensures_VFCQ}
	Let $(\bar x,\bar y)\in\R^n\times\R^m$ be a local minimizer of
	\eqref{eq:BPP} where LUWSMC holds.
	Then \eqref{eq:BPP} is partially calm at $(\bar x,\bar y)$.
\end{lemma}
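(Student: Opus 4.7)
The plan is to combine LUWSMC with local Lipschitz continuity of $F$ and the local minimality of $(\bar x,\bar y)$ in \eqref{eq:BPP} to obtain the partial calmness inequality. The intuition is that LUWSMC lets us replace the point $y$ (which is only approximately lower-level feasible, up to the perturbation $u$) by a genuine lower-level solution $y'\in S(x)$ whose distance from $y$ is controlled linearly by $u$; then local minimality gives a lower bound on $F(x,y')$ and Lipschitz continuity transfers this bound to $F(x,y)$ at the cost of a term proportional to $|u|$.

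More precisely, I would first fix positive constants $\alpha,\varepsilon$ coming from \eqref{eq:LUWSMC}, a radius $\delta_0>0$ for the local minimality property of $(\bar x,\bar y)$, and constants $L>0$, $\delta_1>0$ such that $F$ is $L$-Lipschitz on $\mathbb B_{\delta_1}(\bar x,\bar y)$. A key preliminary observation is that for every $(x,y,u)$ with $g(x,y)\leq 0$ and $f(x,y)-\varphi(x)\leq u$, one has $u\geq f(x,y)-\varphi(x)\geq 0$, so $|u|=u$. Then I would choose $\delta>0$ sufficiently small so that any $(x,y,u)\in\mathbb B_\delta(\bar x,\bar y,0)$ satisfying the perturbed constraints automatically falls inside $\mathbb U_\varepsilon(\bar x,\bar y)\times\mathbb B_\delta(0)$, and so that the auxiliary lower-level point to be constructed stays inside both $\mathbb B_{\delta_0}(\bar x,\bar y)$ and $\mathbb B_{\delta_1}(\bar x,\bar y)$.

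For such a triplet $(x,y,u)$, LUWSMC applied to $(x,y)\in\gph\Gamma\cap\mathbb U_\varepsilon(\bar x,\bar y)$ yields
\[
	\alpha\,\dist(y,S(x))\;\leq\;f(x,y)-\varphi(x)\;\leq\;u,
\]
so $\dist(y,S(x))\leq u/\alpha$. For any $\eta>0$ I would then pick $y_\eta\in S(x)$ with $\norm{y-y_\eta}{\infty}\leq u/\alpha+\eta$; by the choice of $\delta$ and $\eta$ small enough, $(x,y_\eta)$ lies in $\mathbb B_{\delta_0}(\bar x,\bar y)$ and satisfies $x\in X$, $y_\eta\in S(x)$, so the local minimality of $(\bar x,\bar y)$ for \eqref{eq:BPP} gives $F(x,y_\eta)\geq F(\bar x,\bar y)$. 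The Lipschitz estimate then produces
\[
	F(x,y)\;\geq\;F(x,y_\eta)-L\norm{y-y_\eta}{\infty}\;\geq\;F(\bar x,\bar y)-L\bigl(u/\alpha+\eta\bigr),
\]
and letting $\eta\downarrow 0$ yields $F(x,y)+\bar\kappa|u|\geq F(\bar x,\bar y)$ with $\bar\kappa:=L/\alpha$, which is exactly \cref{def:partial_calmness}.

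The only subtle point is the step where I pick $y_\eta\in S(x)$ nearly realizing $\dist(y,S(x))$: one must ensure that $S(x)$ is nonempty for $x$ near $\bar x$ (otherwise the distance is $+\infty$ and LUWSMC would already force $u=+\infty$, a vacuous situation) and then invoke the definition of infimum together with a vanishing slack $\eta>0$ to avoid an attainment assumption on $S(x)$. Once this approximation step is handled, the rest is a direct chaining of the three ingredients.
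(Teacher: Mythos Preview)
Your proposal is correct and follows essentially the same route as the paper's proof: fix the LUWSMC, local-optimality, and Lipschitz radii, pick a lower-level solution close to $y$ via the weak-sharp-minimum estimate, and chain local minimality with the Lipschitz bound to obtain $\bar\kappa=L/\alpha$. The only cosmetic difference is that the paper selects an exact nearest point $\tilde y\in S(x)$ (legitimate here since $S(x)$ is closed under the standing continuity assumptions on $f$ and $g$), whereas you work with an approximate minimizer $y_\eta$ and let $\eta\downarrow 0$; both variants yield the same inequality.
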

\begin{proof}
	By local optimality of $(\bar x,\bar y)$ for \eqref{eq:BPP}, we find a constant
	$\gamma>0$ such that $F(x,y)\geq F(\bar x,\bar y)$ holds for all 
	$(x,y)\in\mathbb B_\gamma(\bar x,\bar y)$ which are feasible to \eqref{eq:BPP}.
	Without loss of generality, we may assume that $F$ is Lipschitz continuous on
	$\mathbb B_\gamma(\bar x,\bar y)$ with Lipschitz constant $L>0$. 
	The assumptions of the lemma guarantee the existence of constants $\alpha>0$ 
	and $\varepsilon>0$ such that \eqref{eq:LUWSMC} holds.
	Set $\delta:=\min\{\varepsilon/2,\max\{\alpha,1\}\gamma/2\}$ and fix 
	a triplet $(x,y,u)\in\mathbb B_\delta(\bar x,\bar y,0)$ satisfying 
	$x\in X$, $f(x,y)-\varphi(x)\leq u$,
	and $g(x,y)\leq 0$. Due to $(x,y)\in\gph\Gamma\cap\mathbb U_\varepsilon(\bar x,\bar y)$,
	we find some point $\tilde y\in S(x)$ such that the estimate
	$\norm{y-\tilde y}{\infty}\leq(1/\alpha)(f(x,y)-\varphi(x))$ holds. This yields
	\begin{align*}
		\norm{\tilde y-\bar y}{\infty}
		\leq
		\norm{\tilde y-y}{\infty}+\norm{y-\bar y}{\infty}
		\leq
		(1/\alpha)u+\norm{y-\bar y}{\infty}
		\leq
		\delta/\alpha+\delta
		\leq
		\gamma,
	\end{align*}
	and due to $x\in\mathbb B_\gamma(\bar x)$, we find $(x,\tilde y)\in\mathbb B_\gamma(\bar x,\bar y)$.
	Furthermore, this point is feasible to \eqref{eq:BPP} which is why we obtain
	\begin{align*}
		F(x,y)-F(\bar x,\bar y)
		&
		\geq
		F(x,y)-F(x,\tilde y)
		\geq
		-L\norm{y-\tilde y}{\infty}\\
		&
		\geq
		-(L/\alpha)(f(x,y)-\varphi(x))
		\geq
		-(L/\alpha)u.
	\end{align*}
	Rearranging this inequality while noting that $(x,y,u)\in\mathbb B_\delta(\bar x,\bar y,0)$
	has been chosen arbitrarily, the lemma's assertion follows.
\end{proof}

Next, we show that RRCQ is a sufficient condition for LUWSMC.
This generalizes the recent result \cite[Theorem~6.1]{BednarczukMinchenkoRutkowski2019}.
\begin{lemma}\label{lem:RRCQ_implies_LUWSMC}
	Let $(\bar x,\bar y)\in\gph S$ be chosen where RRCQ holds.
	Furthermore, assume that there is some neighbourhood $U\subset\R^n$
	of $\bar x$ such that $(\dom\Gamma)\cap U=(\dom S)\cap U$ is valid.
	Then LUWSMC holds at $(\bar x,\bar y)$.
\end{lemma}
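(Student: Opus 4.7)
The plan is to unfold the definition of RRCQ, translate it into an inequality on $S$ via the identity $\gph\Phi=\gph S$, and then show that along points in $\gph\Gamma$ the right-hand side of the R-regularity estimate collapses to $f(x,y)-\varphi(x)$.

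More precisely, I would begin by applying the definition of RRCQ at $(\bar x,\bar y)$: there are constants $\kappa>0$ and $\varepsilon_0>0$ such that, for every $(x,y)\in\mathbb U_{\varepsilon_0}(\bar x,\bar y)$ with $x\in\dom\Phi$, one has
\[
	\dist(y,\Phi(x))
	\;\leq\;
	\kappa\max\bigl\{0,\,f(x,y)-\varphi(x),\,g_1(x,y),\ldots,g_q(x,y)\bigr\}.
\]
Since $\Phi$ and $S$ have the same graph by construction of $\varphi$, one immediately has $\Phi(x)=S(x)$ for every $x$ and $\dom\Phi=\dom S$, so the above inequality can be rewritten with $S(x)$ replacing $\Phi(x)$.

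Next, I would shrink the neighbourhood: choose $\varepsilon\in(0,\varepsilon_0]$ small enough that $\mathbb U_\varepsilon(\bar x)\subset U$, where $U$ is the neighbourhood supplied by the second hypothesis. Then for any $(x,y)\in\gph\Gamma\cap\mathbb U_\varepsilon(\bar x,\bar y)$ one has $x\in\dom\Gamma\cap U=\dom S\cap U=\dom\Phi\cap U$, so the R-regularity estimate is applicable at $(x,y)$. Because $(x,y)\in\gph\Gamma$, all components $g_i(x,y)$ are nonpositive and therefore do not contribute to the maximum; moreover, $x\in\dom S$ implies that $\varphi(x)$ is finite and, by its very definition as an infimum over the feasible set of the lower level problem, $f(x,y)-\varphi(x)\geq 0$. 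Consequently, the maximum on the right-hand side reduces to $f(x,y)-\varphi(x)$, yielding
\[
	\dist(y,S(x))\;\leq\;\kappa\bigl(f(x,y)-\varphi(x)\bigr),
\]
which is precisely \eqref{eq:LUWSMC} with $\alpha:=1/\kappa$.

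The only genuine subtlety is the passage from the R-regularity estimate, which is a priori restricted to $x\in\dom\Phi$, to the corresponding estimate for arbitrary $x\in\dom\Gamma$ close to $\bar x$; this is exactly what the auxiliary assumption $(\dom\Gamma)\cap U=(\dom S)\cap U$ is designed to provide, and it is where the two domain hypotheses must be reconciled. Once this is in place, the rest of the proof is a direct sign argument using the definitions of $\varphi$ and $\gph\Gamma$.
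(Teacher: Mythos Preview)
Your proposal is correct and follows essentially the same route as the paper: unfold R-regularity of $\Phi$ w.r.t.\ $\dom\Phi$, use the domain hypothesis $(\dom\Gamma)\cap U=(\dom S)\cap U$ to make the estimate applicable to every $x$ arising from $\gph\Gamma$ near $\bar x$, and then observe that on $\gph\Gamma$ the maximum collapses to $f(x,y)-\varphi(x)\geq 0$. The only cosmetic differences are that you explicitly shrink $\varepsilon$ so that $\mathbb U_\varepsilon(\bar x)\subset U$ and spell out $\alpha=1/\kappa$, whereas the paper phrases the same reduction as taking an open ball inside $\mathbb U_\varepsilon(\bar x,\bar y)\cap(U\times\R^m)$ and dividing by $\kappa$.
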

\begin{proof}
	Noting that RRCQ holds at $(\bar x,\bar y)$, the mapping $\Phi$ is R-regular
	at $(\bar x,\bar y)$ w.r.t.\ $\dom\Phi$.
	Consequently, we find $\kappa>0$ and some $\varepsilon>0$ such that
	\begin{align*}
		&\forall (x,y)\in\mathbb U_\varepsilon(\bar x,\bar y)\cap(\dom\Phi\times\R^m)\colon\\
		&\qquad \dist(y,\Phi(x))\leq \kappa\max\{0,f(x,y)-\varphi(x),g_1(x,y),\ldots,g_q(x,y)\}
	\end{align*}
	holds. For each pair $(x,y)\in\mathbb U_\varepsilon(\bar x,\bar y)$
	with $g(x,y)\leq 0$, we automatically have the inequality $f(x,y)\geq\varphi(x)$ by definition
	of $\varphi$. Thus, we obtain
	\begin{align*}
		&\forall (x,y)\in\mathbb U_\varepsilon(\bar x,\bar y)\cap(\dom\Phi\times\R^m)\colon\\
		&\qquad (x,y)\in\gph\Gamma\,\Longrightarrow\,\dist(y,\Phi(x))\leq\kappa(f(x,y)-\varphi(x)).
	\end{align*}
	Due to $\Phi(x)=S(x)$ for all $x\in\R^n$ and $(\dom\Phi)\cap U=(\dom\Gamma)\cap U$, this implies
	\begin{align*}
		&\forall (x,y)\in\mathbb U_\varepsilon(\bar x,\bar y)\cap(U\times\R^m)\colon\\
		&\qquad (x,y)\in\gph\Gamma\,\Longrightarrow\,\dist(y,S(x))\leq\kappa(f(x,y)-\varphi(x)).		
	\end{align*}
	Observing that we can find an open ball around $(\bar x,\bar y)$ which is contained
	in the intersection $\mathbb U_\varepsilon(\bar x,\bar y)\cap(U\times\R^m)$, division by $\kappa$
	shows that LUWSMC is valid at $(\bar x,\bar y)$.
\end{proof}

Note that the existence of a neighbourhood $U\subset\R^n$ of some given point 
$\bar x\in\dom\Gamma$
such that $(\dom \Gamma)\cap U=(\dom S)\cap U$ holds is not too restrictive. By continuity of
$g$, we already know that $\Gamma$ possesses closed images. Thus, if $\Gamma$ is locally
bounded around $\bar x$, the above condition is inherent by Weierstra\ss' theorem.

Recently, some conditions, which comprise a local constant rank assumption,
implying validity of R-regularity have been derived in
\cite{BednarczukMinchenkoRutkowski2019}.
Applying this to the situation at hand, we obtain the following result.
\begin{lemma}\label{lem:RRCQ_via_CRCQ_and_lower_semicontinuity}
	Fix some point $(\bar x,\bar y)\in\gph S$ where $S$ is inner semicontinuous.
	Furthermore, let the functions $f$ and $g_1,\ldots,g_q$ be locally Lipschitz continuous
	and continuously differentiable
	w.r.t.\ $y$ in a neighbourhood of $(\bar x,\bar y)$.
	Let us denote 
	the index set associated with all lower level inequality constraints which are
	active at $(\bar x,\bar y)$ by
	$I(\bar x,\bar y):=\{i\in\{1,\ldots,q\}\,|\,g_i(\bar x,\bar y)=0\}$.
	Setting $g_0(x,y):=f(x,y)-\varphi(x)$ for all $x\in\R^n$
	and $y\in\R^m$, we assume that there is a neighbourhood $U\subset\R^n\times\R^m$
	of $(\bar x,\bar y)$ such that for each index set $J\subset I(\bar x,\bar y)\cup\{0\}$,
	the family $(\nabla_yg_i(x,y))_{i\in J}$ has constant rank on $U$.
	Then RRCQ is valid.
\end{lemma}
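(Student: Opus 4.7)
The plan is to apply the constant rank sufficient condition for R-regularity established in \cite{BednarczukMinchenkoRutkowski2019} directly to the inequality system defining $\Phi$, namely
\[
	g_0(x,y)\leq 0,\quad g_1(x,y)\leq 0,\ldots,g_q(x,y)\leq 0,
\]
where $g_0(x,y):=f(x,y)-\varphi(x)$. Since $\bar y\in S(\bar x)$, we have $g_0(\bar x,\bar y)=0$, so the active index set for $\Phi$ at $(\bar x,\bar y)$ is exactly $I(\bar x,\bar y)\cup\{0\}$. Because $\varphi$ depends only on $x$, we have $\nabla_y g_0(x,y)=\nabla_y f(x,y)$, hence the assumed constant rank property of $(\nabla_y g_i(x,y))_{i\in J}$ on $U$ for every $J\subset I(\bar x,\bar y)\cup\{0\}$ matches exactly the hypothesis of the cited R-regularity theorem when applied to the system defining $\Phi$.

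The remaining hypotheses of that theorem require the defining functions to be locally Lipschitz continuous around $(\bar x,\bar y)$ and continuously differentiable with respect to $y$. For $g_1,\ldots,g_q$, these properties are part of our assumptions, and for $g_0$ the $y$-differentiability is inherited from $f$. What is not immediate is that $g_0$ is locally Lipschitz continuous in $(x,y)$: this boils down to showing that $\varphi$ is locally Lipschitz continuous on a neighbourhood of $\bar x$. I expect this to be the main obstacle, because $\varphi$ is defined only implicitly. I would obtain Lipschitzness of $\varphi$ by combining three ingredients available under our hypotheses: first, the constant rank property on the $y$-gradients of active $g_i$'s yields R-regularity of $\Gamma$ at $(\bar x,\bar y)$; second, inner semicontinuity of $S$ at $(\bar x,\bar y)$ allows one to select, for every $x$ close to $\bar x$, an element $y(x)\in S(x)$ with $y(x)\to\bar y$; third, the local Lipschitzness of $f$ together with the classical perturbation estimate $|\varphi(x)-\varphi(x')|\le |f(x,y(x))-f(x',y(x))|+L\,\dist(y(x),S(x'))$ (combined with the calmness of $S$ extracted from R-regularity of $\Gamma$) delivers a Lipschitz modulus for $\varphi$ near $\bar x$.

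Once local Lipschitz continuity of $g_0$ is secured, all hypotheses of the constant rank R-regularity result of \cite{BednarczukMinchenkoRutkowski2019} are met for the system defining $\Phi$, and the theorem yields the distance estimate
\[
	\dist(y,\Phi(x))\leq \kappa\,\max\{0,g_0(x,y),g_1(x,y),\ldots,g_q(x,y)\}
\]
valid for all $(x,y)$ in a neighbourhood of $(\bar x,\bar y)$ with $x\in\dom\Phi$. This is precisely R-regularity of $\Phi$ at $(\bar x,\bar y)$ with respect to $\dom\Phi$, i.e., RRCQ. I would close the argument by noting that the whole derivation only uses finiteness of $\varphi$ on $\dom\Phi$, so the restriction to $\dom\Phi$ in the R-regularity conclusion is automatic and no further adjustment is needed.
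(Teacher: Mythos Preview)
Your overall architecture matches the paper's proof exactly: first get R-regularity of $\Gamma$ via \cite[Theorem~4.2]{BednarczukMinchenkoRutkowski2019}, use this to deduce local Lipschitzness of $\varphi$ (the paper does so by citing \cite[Theorem~5.1]{BednarczukMinchenkoRutkowski2019} for the Aubin property of $\Gamma$ and then \cite[Theorem~5.2(i)]{MordukhovichNam2005}), and finally apply \cite[Theorem~4.2]{BednarczukMinchenkoRutkowski2019} once more to the system defining $\Phi$.

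Two points in your sketch deserve correction. First, the cited R-regularity theorem requires, in addition to the constant rank hypothesis, inner semicontinuity of the multifunction at the reference point; you need to state this for both applications. For $\Phi$ this is immediate since $\Phi=S$, and for $\Gamma$ it follows from $S(x)\subset\Gamma(x)$. Second, your perturbation estimate invokes ``calmness of $S$ extracted from R-regularity of $\Gamma$'' and bounds $\dist(y(x),S(x'))$, but R-regularity of $\Gamma$ says nothing about $S$; as written this is circular, since controlling distances to $S$ is essentially what RRCQ delivers. The standard argument instead bounds $\dist(y(x),\Gamma(x'))$ via the Aubin property of $\Gamma$ and uses $\varphi(x')\le f(x',z)$ for $z\in\Gamma(x')$ close to $y(x)$, which together with Lipschitzness of $f$ and inner semicontinuity of $S$ (to keep $y(x)$ near $\bar y$) gives the Lipschitz bound on $\varphi$. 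With these fixes your proof is complete and coincides with the paper's.
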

\begin{proof}
	The assumptions of the lemma guarantee that for each index set $\tilde J\subset I(\bar x,\bar y)$,
	the family $(\nabla_yg_i(x,y))_{i\in\tilde J}$ possesses constant rank on $U$.
	Furthermore, $\Gamma$ is inner semicontinuous at $(\bar x,\bar y)$ by inner semicontinuity
	of $S$ at this point. We now can apply \cite[Theorem~4.2]{BednarczukMinchenkoRutkowski2019}
	in order to obtain that $\Gamma$ is R-regular at $(\bar x,\bar y)$.
	
	Observe that $\varphi$ is continuous at $\bar x$ since $S$ is inner semicontinuous at 
	$(\bar x,\bar y)$. Thus, we can invoke \cite[Theorem~5.1]{BednarczukMinchenkoRutkowski2019}
	and \cite[Theorem~5.2(i)]{MordukhovichNam2005} in order to see that $\varphi$ is already
	locally Lipschitz continuous at $\bar x$. Particularly, $\varphi$ is continuous in a
	neighbourhood of $\bar x$.

	Thus, locally around $(\bar x,\bar y)$, the variational description of $\Phi$
	is provided by functions which are Lipschitz continuous and continuously differentiable w.r.t.\
	$y$. By assumption, $\Phi$ is inner semicontinuous at $(\bar x,\bar y)$ since
	$\Phi$ and $S$ actually coincide.
	Now, the desired result follows from \cite[Theorem~4.2]{BednarczukMinchenkoRutkowski2019}
	again.
\end{proof}
\begin{remark}\label{rem:local_coincidence_of_domains}
	Observe that the assumptions of \cref{lem:RRCQ_via_CRCQ_and_lower_semicontinuity}
	imply that the point of interest $(\bar x,\bar y)\in\R^n\times\R^m$ satisfies
	$\bar x\in\intr\dom S$ since $S$ is supposed to be inner semicontinuous at
	$(\bar x,\bar y)$. Thus, we have $\bar x\in\intr\dom\Gamma$ as well.
	Consequently, the domains of $\Gamma$ and $S$ coincide locally around $\bar x$.
	Particularly, the assumptions of \cref{lem:RRCQ_via_CRCQ_and_lower_semicontinuity} 
	already guarantee that LUWSMC holds at $(\bar x,\bar y)$ due to
	\cref{lem:RRCQ_implies_LUWSMC}.
\end{remark}

In \cref{fig:partial_calmness_CQs}, we depict the relations between the conditions
from \cref{def:partial_calmness_CQs} which are all sufficient for partial calmness
at a given local minimizer of \eqref{eq:BPP}.

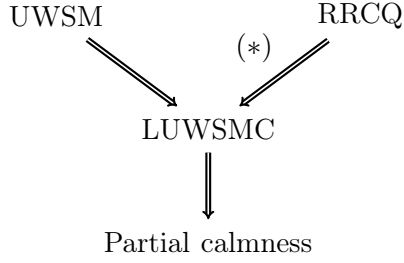
\begin{figure}[h]
\centering
\begin{tikzpicture}[->]

  \node[punkt] at (-2,0) 	(A){UWSM};
  \node[punkt] at (2,0) 	(B){RRCQ};
  \node[punkt] at (0,-1.5) 	(C){LUWSMC};
  \node[punkt] at (0,-3) 	(E){Partial calmness};

  \path     (A) edge[-implies,thick,double] node {}(C)
            (B) edge[-implies,thick,double] node[above left] {$(*)$}(C)
            (C) edge[-implies,thick,double] node {}(E);
\end{tikzpicture}
\caption{
	Relations between conditions implying partial calmness at a
	given local minimizer $(\bar x,\bar y)$ of \eqref{eq:BPP}.
	Relation $(*)$ requires local coincidence of $\dom\Gamma$ and $\dom S$ around $\bar x$.
}
\label{fig:partial_calmness_CQs}
\end{figure}

\section{Partial calmness and linear lower level problems}\label{sec:linear_lower_level}

In this section, we investigate the presence of partial calmness for the bilevel
programming problem \eqref{eq:BPP} where the lower level solution mapping
$S$ is described by one of the following settings:
\begin{itemize}
	\item the lower level problem is linear w.r.t.\ the decision variable $y$, i.e.,
		there exist continuous functions $c\colon\R^n\to\R^m$, $A\colon\R^n\to\R^q$,
		and $B\colon\R^n\to\R^{q\times m}$ such that
		\begin{equation}\label{eq:lower_level_linear_in_y}
			\forall x\in\R^n\colon\quad
			S(x):=\argmin\limits_y\{c(x)^\top y\,|\,A(x)+B(x)y\leq 0\},
		\end{equation}
	\item the lower level problem is a linear parametric optimization problem with
		continuous right-hand side perturbation, i.e., there exist matrices
		$c\in\R^m$ and $B\in\R^{q\times m}$ as well as a continuous function
		$A\colon\R^n\to\R^q$ such that
		\begin{equation}\label{eq:fully_linear_lower_level}
			\forall x\in\R^n\colon\quad
			S(x):=\argmin\limits_y\{c^\top y\,|\,A(x)+By\leq 0\},
		\end{equation}
	\item the lower level problem is a linear parametric optimization problem with
		affine perturbations of the coefficients within the objective function, i.e., there
		exist matrices $c\in\R^m$, $C\in\R^{m\times n}$, $A\in\R^q$, and $B\in\R^{q\times m}$
		such that
		\begin{equation}\label{eq:affine_perturbation_of_objective}
			\forall x\in\R^n\colon\quad
			S(x):=\argmin\limits_y\{(Cx+c)^\top y\,|\,A+By\leq 0\}.
		\end{equation}
\end{itemize}
Observe that the model \eqref{eq:lower_level_linear_in_y} covers \eqref{eq:fully_linear_lower_level}
and \eqref{eq:affine_perturbation_of_objective}.

In \cite[Proposition~4.1]{YeZhu1995}, the authors show that bilevel programming
problems with fully linear lower level problem (recall that this means that the
functions $f$ and $g$ need to be affine, and this is covered by model
\eqref{eq:fully_linear_lower_level} in case where $A$ is affine) are partially calm at all their
local minimizers. As we will show below, this result is correct although the
proof in \cite{YeZhu1995} comprises a small mistake. Subsequently, we present a slightly
more general statement than the one from \cite{YeZhu1995}, which addresses
lower level problems of type \eqref{eq:fully_linear_lower_level}, and state a corrected
version of the proof. Afterwards,
we point the reader's attention to the bug in \cite{YeZhu1995}.

\begin{theorem}\label{thm:partial_calmness_fully_linear_lower_level}
	Let $(\bar x,\bar y)\in\R^n\times\R^m$ be a local minimizer of \eqref{eq:BPP}
	where the lower level solution mapping $S$ is given as in \eqref{eq:fully_linear_lower_level}.
	Then \eqref{eq:BPP} is partially calm at $(\bar x,\bar y)$.
\end{theorem}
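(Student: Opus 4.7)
The plan is to establish LUWSMC at $(\bar x,\bar y)$ and then invoke \cref{lem:LUWSM_ensures_VFCQ} to obtain partial calmness. The decisive structural feature of the setting \eqref{eq:fully_linear_lower_level} is that, for every $x\in\dom S$, the solution set admits the purely polyhedral description
\[
    S(x)=\{y\in\R^m\,|\,By\leq -A(x),\ c^\top y\leq\varphi(x)\},
\]
and the coefficient matrix formed by the rows of $B$ together with the row $c^\top$ is \emph{independent} of~$x$; only the right-hand side vector depends on~$x$.

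The first main step is to apply Hoffman's classical error bound for linear inequality systems to this $x$-independent coefficient matrix. This produces a single constant $\kappa>0$ such that, for every $x\in\dom S$ and every $y\in\R^m$,
\[
    \dist(y,S(x))\leq \kappa\,\max\bigl\{0,\,\max\{B_iy+A_i(x)\,|\,i=1,\ldots,q\},\,c^\top y-\varphi(x)\bigr\}.
\]
The second step is to restrict this estimate to $(x,y)\in\gph\Gamma$. In that case $A(x)+By\leq 0$, and whenever $x\in\dom S$ one additionally has $c^\top y\geq\varphi(x)$ by the very definition of $\varphi$, so the right-hand side collapses to $\kappa(c^\top y-\varphi(x))=\kappa(f(x,y)-\varphi(x))$. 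For $x\in\dom\Gamma\setminus\dom S$ the lower level linear program is unbounded below, hence $\varphi(x)=-\infty$, and the desired inequality is vacuous. Setting $\alpha:=1/\kappa$ therefore yields the global UWSM condition, and in particular LUWSMC at $(\bar x,\bar y)$. An application of \cref{lem:LUWSM_ensures_VFCQ} then delivers partial calmness.

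The part I expect to be delicate, and which the introduction signals as the source of the bug in \cite{YeZhu1995}, is the \emph{uniformity} of the Hoffman constant with respect to the parameter~$x$. It is crucial that Hoffman's lemma be invoked with a coefficient matrix that is literally the same for all $x$; any attempt to treat $\varphi(x)$ as an additional variable row, or to cast the parametric family of linear programs in a form where the coefficient matrix itself depends on $x$, would yield a constant $\kappa=\kappa(x)$ with no uniform upper bound, and the argument would collapse. Once this uniformity is correctly pinned down, no further local analysis around $\bar x$ is needed, and the resulting UWSM property in fact holds globally on $\gph\Gamma$.
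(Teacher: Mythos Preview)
Your proof is correct, and the core idea coincides with the paper's: exploit that the coefficient matrix $\begin{bmatrix} B \\ c^\top \end{bmatrix}$ describing $S(x)$ is independent of $x$, so a single Hoffman-type error bound works uniformly in $x$. The difference is one of packaging. The paper derives the error bound from scratch via LP duality---writing $\dist(y,S(x))$ as a primal linear program, passing to its dual, and observing that the dual feasible polyhedron $Q$ depends only on $c$ and $B$ and therefore has a fixed finite vertex set, which yields the uniform constant $M$---and then verifies partial calmness directly from \cref{def:partial_calmness}. You instead invoke Hoffman's lemma as a black box, deduce UWSM, and route through \cref{lem:LUWSM_ensures_VFCQ}. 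Your version is shorter and conceptually clean; the paper's is self-contained and makes the dependence on $(c,B)$ explicit, which is precisely what \cref{rem:the_classical_bug} and \cref{rem:uniformly_weak_sharp_minimum} exploit afterwards (the latter remark in fact points out that the paper's argument implicitly establishes UWSM, so your route is acknowledged there).

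One small correction to your closing speculation: the actual bug in \cite{YeZhu1995} (see \cref{rem:the_classical_bug}) is not a parameter-dependent Hoffman constant. Rather, in the dual LP the equality constraint $\xi_1-\xi_2+c\xi_3+B^\top\xi_4=0$ was simply dropped, enlarging the feasible set so that the resulting ``bound'' is $+\infty$ whenever $y\notin S(x)$. The issue you describe---loss of uniformity when $c$ or $B$ depend on $x$---is real, but it is the obstruction to extending the result to \eqref{eq:lower_level_linear_in_y}, not the original mistake.
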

\begin{proof}
	Choose $\delta>0$ arbitrarily and fix some point $(x,y,u)\in\mathbb B_\delta(\bar x,\bar y,0)$
	which satisfies
	\[
		x\in X,\quad c^\top y-\varphi(x)\leq u,\quad A(x)+By\leq 0.
	\]
	Noting that this implicitly demands $S(x)\neq\varnothing$, we can pick a vector
	\[
		y(x)\in \argmin_z\{\norm{y-z}{\infty}\,|\,z\in S(x)\}
	\]
	since $S(x)$ is a polyhedron and, thus, closed.
	As a consequence, we obtain
	\begin{align*}
		&\norm{y-y(x)}{\infty}
		=\min\limits_{\sigma,z}
			\bigl\{\sigma
				\,\bigl|\,
				-\sigma\mathtt e\leq y-z\leq\sigma\mathtt e,\,c^\top z-\varphi(x)\leq 0,A(x)+Bz\leq 0
			\bigr\}
		\\
		&\quad
		=\max\limits_\xi
			\left\{
				y^\top(\xi_1-\xi_2)+\varphi(x)\xi_3-A(x)^\top\xi_4
				\,\middle|\,
					\begin{aligned}
						&\xi_1-\xi_2+c\xi_3+B^\top\xi_4=0,\\
						&-\mathtt e^\top\xi_1-\mathtt e^\top\xi_2=1,\\
						&\xi_1,\xi_2,\xi_3,\xi_4\leq 0
					\end{aligned}
			\right\}
		\\
		&\quad
		\stackrel{\textcolor{red}{(*)}}{=}
		\max\limits_\xi
			\left\{
				(\varphi(x)-c^\top y)\xi_3+(-A(x)-By)^\top\xi_4
				\,\middle|\,
					\begin{aligned}
						&\xi_1-\xi_2+c\xi_3+B^\top\xi_4=0,\\
						&-\mathtt e^\top\xi_1-\mathtt e^\top\xi_2=1,\\
						&\xi_1,\xi_2,\xi_3,\xi_4\leq 0
					\end{aligned}
			\right\}
	\end{align*}
	by strong duality of linear programming.
	Observing that the latter program possesses a solution, there exists 
	a vertex $(\xi_1(x,y),\xi_2(x,y),\xi_3(x,y),\xi_4(x,y))$ of the set
	\[
		Q
		:=
		\left\{
			(\xi_1,\xi_2,\xi_3,\xi_4)
			\,\middle|\,
					\begin{aligned}
						&\xi_1-\xi_2+c\xi_3+B^\top\xi_4=0,\\
						&-\mathtt e^\top\xi_1-\mathtt e^\top\xi_2=1,\\
						&\xi_1,\xi_2,\xi_3,\xi_4\leq 0
					\end{aligned}
		\right\}
	\]
	which satisfies
	\begin{align*}
		\norm{y-y(x)}{\infty}
		&=
		(\varphi(x)-c^\top y)\xi_3(x,y)+(-A(x)-By)^\top\xi_4(x,y)\\
		&\leq
		(\varphi(x)-c^\top y)\xi_3(x,y)
		\leq
		|u|\,|\xi_3(x,y)|.
	\end{align*}
	Noting that the polyhedron $Q$ possesses only finitely many vertices and
	does not depend on the choice of $(x,y)$, there is some constant $M>0$
	such that $\norm{y-y(x)}{\infty}\leq M|u|$ follows.
	Observing that $F$ is locally Lipschitz continuous, we find some
	constant $L>0$ such that $F$ is Lipschitz continuous with Lipschitz
	constant $L$ on the ball $\mathbb B_{(M+1)\delta}(\bar x,\bar y)$
	if only $\delta$ is small enough.
	Since we have
	\[
		\norm{y(x)-\bar y}{\infty}
		\leq
		\norm{y(x)-y}{\infty}+\norm{y-\bar y}{\infty}
		\leq
		M|u|+\norm{y-\bar y}{\infty}
		\leq
		(M+1)\delta
	\]
	and $\norm{x-\bar x}{\infty}\leq\delta$,
	we can choose $\delta$ so small such that $(x,y(x))$ always lies within the
	radius of local optimality associated with the local minimizer $(\bar x,\bar y)$
	of \eqref{eq:BPP}. Noting that $(x,y(x))$ is feasible to \eqref{eq:BPP}, it holds
	\[
		F(x,y)-F(\bar x,\bar y)
		\geq
		F(x,y)-F(x,y(x))
		\geq
		-L\,\norm{y-y(x)}{\infty}
		\geq
		-LM|u|.
	\]
	Recalling that $(x,y,u)\in\mathbb B_\delta(\bar x,\bar y,0)$ has been chosen
	arbitrarily, the statement of the theorem follows.
\end{proof}

Let us recall that in \cite[Section~4.2]{YeZhu1995}, the authors consider the
particular case where the function $A$ in the definition of \eqref{eq:fully_linear_lower_level}
is affine.

Next, we specify where the bug in the original proof from \cite{YeZhu1995}
is located. If not stated otherwise, the subsequently stated remarks address
the lower level problem from \eqref{eq:fully_linear_lower_level}.
\begin{remark}\label{rem:the_classical_bug}
	In the classical proof of \cite[Proposition~4.1]{YeZhu1995},
	it has been claimed that equality in $\textcolor{red}{(*)}$ holds
	with the right-hand side
	\begin{equation}\label{eq:problem_wrong_estimate}
		\max\limits_\xi
			\left\{
				(\varphi(x)-c^\top y)\xi_3+(-A(x)-By)^\top\xi_4
				\,\middle|\,
					\begin{aligned}
						&-\mathtt e^\top\xi_1-\mathtt e^\top\xi_2=1,\\
						&\xi_1,\xi_2,\xi_3,\xi_4\leq 0
					\end{aligned}
			\right\}
	\end{equation}
	where the constraint $\xi_1-\xi_2+c\xi_3+B^\top\xi_4=0$ is deleted from
	the feasible set. Noting that this enlarges the feasible set of the
	program from the left-hand side of $\textcolor{red}{(*)}$, the equality there
	needs to be replaced by the relation $\leq$.
	Even worse, it is obvious that whenever $y\notin S(x)$
	holds, then, due to $\varphi(x)-c^\top y<0$,
	\eqref{eq:problem_wrong_estimate} possesses the optimal
	value $+\infty$. Thus, the authors obtained the trivial estimate
	$\norm{y-y(x)}{\infty}\leq+\infty$ in \cite{YeZhu1995} which is, for sure,
	of no use.
	
	Clearly, the proof provided above cannot be generalized to the setting where $S$
	is given as in \eqref{eq:lower_level_linear_in_y}, since in this case, the vertices of the set
	\[
		Q(x)
		:=
		\left\{
			(\xi_1,\xi_2,\xi_3,\xi_4)
			\,\middle|\,
					\begin{aligned}
						&\xi_1-\xi_2+c(x)\xi_3+B(x)^\top\xi_4=0,\\
						&-\mathtt e^\top\xi_1-\mathtt e^\top\xi_2=1,\\
						&\xi_1,\xi_2,\xi_3,\xi_4\leq 0
					\end{aligned}
		\right\}
	\]
	depend on the parameter $x$ which means that the existence of the constant
	$M$ in the proof of \cref{thm:partial_calmness_fully_linear_lower_level}
	does not come for free.
	These arguments also demonstrate that the proof of
	\cite[Theorem~4.2]{DempeZemkoho2013} is not correct since it
	reprises the original bug from \cite{YeZhu1995}.
\end{remark}

\begin{remark}\label{rem:uniformly_weak_sharp_minimum}
	Inspecting the proof of \cref{thm:partial_calmness_fully_linear_lower_level}
	carefully, one can observe that the condition
	\begin{align*}
		&\exists M>0 \;\; \forall (x,y)\in\R^n\times\R^m\colon\\
		&\qquad A(x)+By\leq 0
		\,\Longrightarrow\,
		\dist(y,S(x))\leq M(c^\top y-\varphi(x))
	\end{align*}
	has been verified, and the latter already means that the parametric
	lower level optimization problem associated with \eqref{eq:fully_linear_lower_level}  
	possesses a UWSM,
	see \cref{def:partial_calmness_CQs}.
	Recently, this has been pointed out in \cite[Lemma~2.1]{MinchenkoBerezhov2017}.
\end{remark}

\begin{remark}\label{rem:fully_linear_lower_level_and_calmness}
	Consider the situation where the mapping $A$ is affine.
	In this case, it is well known that
	the solution map $S$ associated with \eqref{eq:fully_linear_lower_level} is inner
	semicontinuous at each point $(\bar x,\bar y)\in\gph S$ which satisfies
	$\bar x\in\intr\dom S$.
	Thus, \cref{lem:RRCQ_via_CRCQ_and_lower_semicontinuity} guarantees that 
	at any such point, RRCQ is valid since
	the remaining constant rank assumption trivially holds observing that all appearing gradients
	w.r.t.\ $y$ are constant.
\end{remark}

In the light of \cref{rem:the_classical_bug}, one now might ask whether the result of \cref{thm:partial_calmness_fully_linear_lower_level}
can be generalized to the setting where the
lower level problem is given as in \eqref{eq:lower_level_linear_in_y} as proposed in
\cite[Theorem~4.2]{DempeZemkoho2013}.
As the following example shows, this is, unluckily, not the case.
\begin{example}\label{ex:linear_lower_level_no_partial_calmness}
	Let us consider the bilevel optimization problem
	\[
		\min\limits_{x,y}\{-x+y\,|\,x\leq 2,\,y\in S(x)\}
	\]
	where $S\colon\R\rightrightarrows\R$ is given by
	\[
		\forall x\in\R\colon\quad
		S(x):=\argmin\limits_y\{-x^2y\,|\,y\in[0,1]\}.
	\]
	One can easily check that
	\[
		\forall x\in\R\colon\quad
		S(x)
		:=
		\begin{cases}
			\{1\}	&\text{if } x\neq 0,\\
			[0,1]	&\text{if } x=0
		\end{cases}
		\qquad
		\mbox{and}
		\qquad
		\varphi(x)
		:=
		-x^2
	\]
	hold true, i.e., $(\bar x,\bar y):=(0,0)$ is a local minimizer of the given bilevel programming problem. Furthermore, the global minimizer of this bilevel optimization problem
	is given by $(\tilde x,\tilde y):=(2,1)$.
	
	Now, for $\kappa>0$, let us consider the associated partially penalized problem
	\eqref{eq:partially_penalized_OVR}
	which reads as
	\begin{equation}\label{eq:partially_penalized_LLVF_example}
		\min\limits_{x,y}\{-x+y+\kappa x^2(1-y)\,|\,x\leq 2,\,y\in[0,1]\}.
	\end{equation}
	The sequence $\{(\tfrac1k,0)\}_{k\in\N}$ is feasible for the latter and converges to $(\bar x,\bar y)$.
	The associated objective values of \eqref{eq:partially_penalized_LLVF_example}
	are given by $\{-\tfrac1k+\kappa\tfrac1{k^2}\}_{k\in\N}$. Observe that for sufficiently large $k\in\N$,
	the elements of this sequence are negative. Particularly, there is no finite $\kappa>0$ such that
	$(\bar x,\bar y)$ is a local minimizer of \eqref{eq:partially_penalized_LLVF_example}.
	Due to \cref{lem:partial_calmness_and_exact_penalization},
	the bilevel optimization problem under consideration cannot
	be partially calm at $(\bar x,\bar y)$.
\end{example}

The above example refutes \cite[Theorem~4.2]{DempeZemkoho2013}.
The proof provided in the latter paper essentially adapted
the one from \cite{YeZhu1995} and the bug therein, see \cref{rem:the_classical_bug}.
In contrast to \cite{YeZhu1995}, where this mistake did
not effect the correctness of the result, the statement from \cite{DempeZemkoho2013} is not true in general.

In the light of \cref{lem:LUWSM_ensures_VFCQ}, the following result, however, provides a sufficient condition for partial
calmness in cases where the lower level problem is given as in \eqref{eq:lower_level_linear_in_y}.
It follows directly from \cref{lem:RRCQ_via_CRCQ_and_lower_semicontinuity},
see \cref{rem:local_coincidence_of_domains} as well.
\begin{theorem}\label{thm:linear_only_in_y}
	Let us assume that the solution map $S\colon\R^n\rightrightarrows\R^m$ is given
	as in \eqref{eq:lower_level_linear_in_y} where the matrix functions $c$, $A$, and
	$B$ are assumed to be locally Lipschitz continuous.
	Fix a point $(\bar x,\bar y)\in\gph S$ where $S$ is inner semicontinuous.
	Furthermore, set
	\[
		\forall x\in\R^n\colon\quad
		\mathcal B(x):=
		\begin{bmatrix}
			c(x)^\top\\
			B(x)_{I(\bar x,\bar y)}
		\end{bmatrix}
	\]
	and assume that there is a neighbourhood $U\subset \R^n$ of $\bar x$
	such that for each index set  $J\subset\{1,\ldots,|I(\bar x,\bar y)|+1\}$,
	the matrix $\mathcal B(x)_J$ has constant row rank for all $x\in U$.
	Then RRCQ holds at $(\bar x,\bar y)$.
\end{theorem}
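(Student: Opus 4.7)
The plan is to invoke \cref{lem:RRCQ_via_CRCQ_and_lower_semicontinuity}, recognizing \eqref{eq:lower_level_linear_in_y} as a lower level problem with $f(x,y):=c(x)^\top y$ and $g_i(x,y):=A_i(x)+B(x)_i\,y$ for $i=1,\ldots,q$. First I would verify the smoothness prerequisites of that lemma: each such $f$ and $g_i$ is continuously differentiable in $y$ with $\nabla_y f(x,y)=c(x)$ and $\nabla_y g_i(x,y)=B(x)_i^\top$, and jointly locally Lipschitz continuous in $(x,y)$ around $(\bar x,\bar y)$, the latter being a routine consequence of the local Lipschitz continuity of $c$, $A$, and $B$ combined with the boundedness of $y$ in a neighbourhood of $\bar y$.

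The essential step is to match the constant rank hypothesis. Setting $g_0(x,y):=f(x,y)-\varphi(x)$ as in \cref{lem:RRCQ_via_CRCQ_and_lower_semicontinuity}, one has $\nabla_y g_0(x,y)=c(x)$ as well. Under the natural bijection sending $0$ to the first row of $\mathcal B(x)$ and each $i\in I(\bar x,\bar y)$ to the corresponding row of $\mathcal B(x)$ containing $B(x)_i$, every index set $J'\subset I(\bar x,\bar y)\cup\{0\}$ corresponds to some $J\subset\{1,\ldots,|I(\bar x,\bar y)|+1\}$ such that the matrix whose rows are $\nabla_y g_i(x,y)^\top$ for $i\in J'$ is precisely $\mathcal B(x)_J$. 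Since these gradients are independent of $y$, the theorem's hypothesis that each $\mathcal B(x)_J$ has constant row rank on the neighbourhood $U$ of $\bar x$ transfers verbatim to the constant rank condition required in \cref{lem:RRCQ_via_CRCQ_and_lower_semicontinuity} on any product neighbourhood of the form $U\times V$ of $(\bar x,\bar y)$.

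Together with the inner semicontinuity of $S$ at $(\bar x,\bar y)$ assumed in the theorem, all hypotheses of \cref{lem:RRCQ_via_CRCQ_and_lower_semicontinuity} are then satisfied, so that lemma yields validity of RRCQ at $(\bar x,\bar y)$, which is the claim. The main obstacle is really just the indexing translation: one must verify that every subset of $I(\bar x,\bar y)\cup\{0\}$ relevant to the abstract lemma actually appears as some $\mathcal B(x)_J$ in the present theorem, and that the absence of $y$-dependence in the gradients allows the rank hypothesis stated only over $U\subset\R^n$ to stand in for the rank hypothesis demanded over a neighbourhood of $(\bar x,\bar y)\in\R^n\times\R^m$.
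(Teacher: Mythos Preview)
Your proposal is correct and follows exactly the approach the paper takes: the paper states that the theorem ``follows directly from \cref{lem:RRCQ_via_CRCQ_and_lower_semicontinuity}'' without further detail, and you have simply supplied the routine verification of that lemma's hypotheses, including the smoothness checks and the indexing translation between $J'\subset I(\bar x,\bar y)\cup\{0\}$ and $J\subset\{1,\ldots,|I(\bar x,\bar y)|+1\}$.
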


Observe that due to \cref{rem:fully_linear_lower_level_and_calmness}, \cref{thm:linear_only_in_y}
covers \cref{thm:partial_calmness_fully_linear_lower_level} in the setting where
$A$ is an affine function while $c$ and $B$ are constant.
Clearly, the assumptions of \cref{thm:linear_only_in_y} cannot be neglected when
considering partial calmness of \eqref{eq:BPP} -
checking \cref{ex:linear_lower_level_no_partial_calmness}, one obtains that
both the inner semicontinuity assumption on $S$ and the constant rank assumption
are violated at the point of interest.
Below, we present an example where all these assumptions hold.
\begin{example}\label{ex:RRCQ_via_CRCQ}
	We consider the lower level solution map $S\colon\R\rightrightarrows\R$ given
	in \cref{ex:linear_lower_level_no_partial_calmness} at 
	$(\tilde x,\tilde y):=(2,1)$. 
	Clearly, $S$ is inner semicontinuous at $(\tilde x,\tilde y)$ and
	the matrix
	\[
		\forall x\in\R\colon\quad
		\mathcal B(x)=\begin{bmatrix}-x^2\\1\end{bmatrix}
	\]
	satisfies the constant rank assumption from \cref{thm:linear_only_in_y}
	in a neighbourhood of $\tilde x$.
	Thus, RRCQ is valid at $(\tilde x,\tilde y)$.
	In the light of \cref{rem:local_coincidence_of_domains} and
	\cref{lem:LUWSM_ensures_VFCQ}, the bilevel optimization problem
	from \cref{ex:linear_lower_level_no_partial_calmness} is
	partially calm at its global minimizer $(\tilde x,\tilde y)$.
\end{example}

The next example illustrates that even in the presence of inner semicontinuity
of the solution mapping $S$ at the point of interest, partial calmness does not
need to be inherent for problems with the lower level problem \eqref{eq:lower_level_linear_in_y}
if the constant rank assumption from \cref{thm:linear_only_in_y} is violated.
\begin{example}\label{ex:lower_level_with_lsc_solution_map}
	Let us investigate the bilevel optimization problem
	\[
		\min\limits_{x,y}\{xy_1\,|\,y\in S(x)\}
	\]
	where $S\colon\R\rightrightarrows\R^2$ is given by
	\[
		\forall x\in\R\colon\quad
		S(x):=\argmin\limits_y\{-x^2y_2\,|\,y_2\leq 0,\,-xy_1+y_2\leq 0\}.
	\]
	One can easily check that the corresponding solution mapping $S$ and 
	the optimal value function $\varphi$, respectively,
	take the following precise forms: 
	\[
		\forall x\in\R\colon\quad 
		S(x)=
			\begin{cases}
				(-\infty,0]\times\{0\}	&x<0,\\
				\R\times(-\infty,0]		&x=0,\\
				[0,\infty)\times\{0\}	&x>0
			\end{cases}
		\qquad\mbox{and}\qquad
		\varphi(x)=0.
	\]
	Furthermore, one can check that $(\bar x,\bar y):=(0,(0,0))$ 
	is a global minimizer of the given bilevel
	optimization problem. Clearly, $S$ is inner semicontinuous at this point.
	The associated matrix $\mathcal B$ given by
	\[
		\forall x\in\R\colon\quad
		\mathcal B(x)=
		\begin{bmatrix}
			0	&	-x^2\\	0	&	1\\ -x	&	1
		\end{bmatrix}
	\]
	does not satisfy the constant rank assumption at $\bar x$.\\
	For $\kappa>0$, we consider the associated partially penalized problem \eqref{eq:partially_penalized_OVR}
	given by
	\[
		\min\limits_{x,y}\{xy_1-\kappa x^2y_2\,|\,y_2\leq 0,\,-xy_1+y_2\leq 0\}.
	\]
	Investigating the feasible sequence $\{(\tfrac{1}{k},(-\tfrac{1}{k},-\tfrac{1}{k^2}))\}_{k\in\N}$,
	the associated objective values are $\{\tfrac{\kappa}{k^4}-\tfrac1{k^2}\}_{k\in\N}$, and this shows
	that $(\bar x,\bar y)$ is not a local minimizer of the latter problem for any $\kappa>0$ since
	the elements of the latter sequence are negative for sufficiently large $k\in\N$.
	Hence, the underlying bilevel optimization problem is not partially calm at $(\bar x,\bar y)$.
\end{example}

Let us briefly mention that \cite[Theorem~3.4]{Ye1998} provides a condition
which ensures that the lower level problem \eqref{eq:lower_level_linear_in_y}
even possesses a UWSM, see \cref{def:partial_calmness_CQs}.
On the other hand, in \cite[Example~3.9]{HenrionSurowiec2011}, it has been shown that
already parametric optimization problems of type \eqref{eq:affine_perturbation_of_objective}
do not necessarily satisfy LUWSMC and, thus, cannot possess a UWSM, i.e., the assumptions of \cite[Theorem~3.4]{Ye1998} are
not generally satisfied for this class of lower level problems.
In the light of \cref{rem:uniformly_weak_sharp_minimum}, this observation already underlines that trying
to adapt the proof of \cref{thm:partial_calmness_fully_linear_lower_level} is hopeless in order to infer
the partial calmness condition for bilevel programming problems with lower level problem
\eqref{eq:affine_perturbation_of_objective} and, thus, \eqref{eq:lower_level_linear_in_y}.
Furthermore, we would like to note that the solution mapping from \eqref{eq:affine_perturbation_of_objective}
is not generally inner semicontinuous at the points of its graph which restricts the
applicability of \cref{thm:linear_only_in_y}.

We want to close this section with an example which illustrates that bilevel optimization
problems with lower level problems of type \eqref{eq:affine_perturbation_of_objective}
indeed do not need to be partially calm at their respective local minimizers.
This underlines that standard models from bilevel road pricing as discussed in
\cite{DempeFranke2015,DempeZemkoho2012} are not generally partially calm at their
local minimizers without additional assumptions.
\begin{example}\label{ex:no_partial_calmness_for_affinely_perturbed_lower_level_objective}
	We investigate the bilevel optimization problem
	\[
		\min\limits_{x,y}\{x_2(y+1)\,|\,x_1=x_2^2,\,y\in S(x)\}
	\]
	where $S\colon\R^2\rightrightarrows\R$ is given by
	\[
		\forall x\in\R^2\colon\quad
		S(x):=\argmin\limits_y\{x_1y\,|\,y\in[-1,1]\}.
	\]
	One obtains
	\[
		\forall x\in\R^2\colon\quad
		S(x)
		=
		\begin{cases}
			\{-1\}		&x_1>0,\\
			[-1,1]		&x_1=0,\\
			\{1\}		&x_1<0
		\end{cases}
		\qquad
		\mbox{and}
		\qquad
		\varphi(x)
		=
		-|x_1|
	\]
	by simple calculations. One can easily check that each feasible point of this
	bilevel optimization problem possesses objective value $0$.
	Particularly, $(\bar x,\bar y):=((0,0),-1)$ is one of its global minimizers.
	Next, for arbitrary $\kappa>0$, we consider the associated partially
	perturbed problem \eqref{eq:partially_penalized_OVR} given by
	\begin{equation}\label{eq:partially_perturbed_OVR_example_2}
		\min\limits_{x,y}\{x_2(y+1)+\kappa(x_1y+|x_1|)\,|\,x_1=x_2^2,\,y\in[-1,1]\}.
	\end{equation}
	We investigate the feasible sequence $\{((\tfrac{1}{k^2},-\tfrac{1}{k}),-1+\tfrac1k)\}_{k\in\N}$
	which converges to $(\bar x,\bar y)$.
	The associated sequence of objective values is given by $\{\tfrac{\kappa}{k^3}-\tfrac{1}{k^2}\}_{k\in\N}$,
	and the latter is negative for sufficiently large $k\in\N$.
	This shows that $(\bar x,\bar y)$ is not a local minimizer of
	\eqref{eq:partially_perturbed_OVR_example_2}, and due to \cref{lem:partial_calmness_and_exact_penalization},
	the bilevel programming problem under consideration cannot be partially calm at
	$(\bar x,\bar y)$.
\end{example}

We would like to point the reader's attention to the fact that the upper level feasible set $X$ in
\cref{ex:no_partial_calmness_for_affinely_perturbed_lower_level_objective}
is not a polyhedron, which is the striking idea behind the construction
of this counterexample. It is, however, an open question whether or not
bilevel optimization problems with lower level problem \eqref{eq:affine_perturbation_of_objective}
and polyhedral $X$ are partially calm at their respective local minimizers.
Observing that the solution mapping $S$ is a polyhedral set-valued mapping in this
case (i.e., its graph can be represented as the union of finitely many convex
polyhedral sets), this indeed might be possible since the associated feasible set of the bilevel
optimization problem \eqref{eq:BPP} is the union of finitely many convex polyhedral
sets, and problems of this type are likely to be calm in Clarke's sense
due to \cite[Proposition~1]{Robinson1981}.

\section{Conclusions}\label{sec:conclusion}

This manuscript underlines the well known fact that the property of a bilevel optimization problem
to be partially calm at one of its local minimizers is quite restrictive.
With the aid of three simple counterexamples, we have shown that this observation
already addresses situations where the lower level problem is linear w.r.t.\ the lower level
decision maker's variable.
Our respective analysis refutes the result \cite[Theorem~4.2]{DempeZemkoho2013}.
On the way, we revealed and corrected a bug in the proof of the seminal result \cite[Proposition~4.1]{YeZhu1995}
which has been spread over the literature about bilevel optimization.


%

\end{document}